\newtheoremstyle{KURSIV}{\topsep}{\topsep}{\it}{}{\bf}{.}{5pt}{\thmnumber{#2}{.~}\thmname{#1}\thmnote{ #3}}
\theoremstyle{KURSIV}
\newtheorem{thm}{Theorem}
\newtheorem{lem}[thm]{Lemma}
\newtheoremstyle{PLAIN}{\topsep}{\topsep}{}{}{\bf}{.}{5pt}{\thmnumber{#2}{.~}\thmname{#1}\thmnote{ #3}}
\theoremstyle{PLAIN}
\newtheorem{dfn}[thm]{Definition}
\newtheorem{conv}[thm]{Konvention}
\newtheorem{ex}[thm]{Beispiel}
\newtheorem{rem}[thm]{Bemerkung}
\newtheorem{aufg}[thm]{\"Ubung}
\begin{document}

\title{Eine kurze Einf\"uhrung in \\Gottlob Freges \textquotedblleft{}Begriffsschrift\textquotedblright{}$^{\text{\hspace{0.4pt}1}}$}
\author{Sven-Ake Wegner$^{\text{\hspace{0.5pt}2}}$}
\date{6.~Mai 2008}
\maketitle

\renewcommand{\thefootnote}{}

\hspace{-1000pt}\footnote{$^{\text{1}}$Die folgenden Notizen erg\"anzen einen Vortrag des Autors im Seminar \textquotedblleft{}Geschichte der modernen\linebreak{}\phantom{a}\hspace{15.2pt}Logik\textquotedblright{} von Volker Peckhaus, gehalten an der Universit\"at Paderborn im Sommersemester 2008.\vspace{2pt}}

\hspace{-1000pt}\footnote{$^{\text{2}}$Mathematisches Institut, Universit\"at Paderborn, Warburger Stra\ss{}e 100, 33098 Paderborn.}

\vspace{-40pt}

\begin{abstract}
In seiner 1879 publizierten Arbeit \cite{Frege} \"uber die Begriffsschrift f\"uhrt Gottlob Frege eine Notation ein, um u.a. mathematische Argumentationen mithilfe derselben zu formalisieren. Im Folgenden erl\"autern wir die Fregesche Notation unter Verwendung der \"ublichen aussagenlogischen Bezeichnungen und diskutieren Unterschiede zu und Gemeinsamkeiten mit modernen Schreibweisen. Ferner res\"umieren wir einige der Methoden, die Frege in \cite{Frege} verwendet.
\end{abstract}


\section{Vorbemerkungen}\label{Introduction}

In der unter Mathematikern---nicht Logikern---gebr\"auchlichen Notation der Aussagenlogik verwendet man oft ein Gemisch aus Symbolen und Prosa. Zum Beispiel schreibt man h\"aufig Dinge, wie \textquotedblleft{}Sei $A$ eine Aussage.\textquotedblright{} und dann vielleicht nach einer Argumentation als Schlussfolgerung \textquotedblleft{}$A$ ist wahr.\textquotedblright{}. Oder aber man beginnt einen Beweis mit dem Satz \textquotedblleft{}Angenommen, $A$ ist nicht wahr.\textquotedblleft{} und folgert daraus einen Widerspruch. Obwohl obige S\"atze wesentliche Teile einer formalen Argumentation sind, gibt es dennoch keine Symbole, die anstelle von Prosa benutzt werden. Oft schreibt man auch die Worte \textquotedblleft{}f\"ur alle\textquotedblright{}, \textquotedblleft{}es existiert\textquotedblright{} und \textquotedblleft{}daraus folgt\textquotedblright{} aus, anstatt die in diesem Fall verf\"ugbaren Symbole \textquotedblleft{}$\forall$\textquotedblright{}, \textquotedblleft{}$\exists$\textquotedblright{}  und 
\textquotedblleft{}$\Rightarrow$\textquotedblright{} zu benutzen.
\smallskip
\\ Wieviele und welche Teile einer mathematischen Argumentation in Prosa bzw. mithilfe von Symbolen formuliert werden, ist stark unterschiedlich und meist eine Stil-- oder Geschmacksfrage. In Gottlob Freges Begriffsschrift gibt es kein derartiges Gemisch. In seiner Einleitung formuliert er es als sein Ziel einen Kalk\"ul zu entwickeln, der nach strengen formalen Regeln funktioniert und Argumentationen erlaubt, die---im Gegensatz zu Prosa---keine unterschiedlichen Interpretationen zulassen und f\"uhrt in konsequenter Weise eine Notation ein, die v\"ollig ohne Prosa auskommt. Ferner verzichtet er z.B.  auf ein eigenes Symbol f\"ur den Existenzquantor und dr\"uckt diesen nur durch sein Verneinungssymbol und den Allquantor aus. Nat\"urlich k\"onnte man dies auch in der modernen Notation machen (und h\"atte ein Symbol weniger, welches man kennen mu\ss{}), jedoch w\"urde dies der Lesbarkeit einer Argumentation nicht 
gerade zutr\"aglich sein. 
\smallskip
\\Im Folgenden stellen wir die Fregesche Notation vor und bedienen uns dabei der Sprache und Symbolik, wie sie in der Mathematik \"ublicherweise benutzt wird. Da in der Begriffsschrift ein aussagenlogischer Ausdruck ein aus mehreren Einzelteilen zusammengesetztes zweidimensionales Symbol ist und im Gegensatz zur modernen Notation nicht einfach als Zeile von links nach rechts gelesen werden kann, m\"ussen wir in den folgenden Definitionen stets ganze Aussagen betrachten. Die Begriffsschrift ist also nicht einfach nur ein anderer \textquotedblleft{}Zeichensatz\textquotedblright{} als die heute \"ubliche Notation.

\section{Die Begriffsschrift}

Wir verschieben die wesentliche Diskussion dar\"uber, was im Fregeschen Sinne Aussagen, Behauptungen, etc. sind auf Abschnitt \ref{3} und verwenden zun\"achst die Termini \textquotedblleft{}Aussage\textquotedblright{}, \textquotedblleft{}aussagenlogischer Ausdruck\textquotedblright{} und \textquotedblleft{}Aussagevariable\textquotedblright{} im \"ublichen (naiven) Sinne, d.h. eine Aussage ist ein sinnvolles sprachliches Gebilde, welchem sich ein \textquotedblleft{}Wahrheitswert\textquotedblright{} ($w$ oder $f$) zuordnen l\"a\ss{}t. Sinnvolle sprachliche Gebilde, die nach Belegung gewisser Eingangsgr\"o\ss{}en einen Wahrheitswert erhalten nennen wir aussagenlogische Ausdr\"ucke und die gewissen Eingangsgr\"o\ss{}en Aussagevariablen. Es ist uns bewu\ss{}t, da\ss{} wir soeben f\"ur keines der drei Worte eine formale Definition gegeben haben und betrachten Obiges daher als eine Umschreibung, die---auf viele gutartige Beispiele angewand---intuitiv verst\"andlich ist. In diesem 
Sinne sind auch die folgenden Definitionen zu verstehen.
\smallskip
\\Wir werden im Folgenden Abk\"urzungen f\"ur Aussagen und aussagenlogische Ausdr\"ucke verwenden, bzw. Platzhalter oder auch Variablen, die eine beliebige Aussage oder einen beliebigen aussagenlogischen Ausdruck repr\"asentieren.

\begin{conv} Wie schon im ersten Abschnitt diskutiert, gibt es in der heute \"ublichen Notation keine Symbole, um zum Ausdruck zu bringen, da\ss{} eine Variable eine Aussage repr\"asentieren soll, bzw. da\ss{} diese Aussage wahr ist. In der Fregeschen Notation hingegen gibt es hierf\"ur Symbole und diese sind in die weitere Symbolik \textquotedblleft{}integriert\textquotedblright{} in einem Sinne den die folgenden Definitionen zeigen werden. Wir halten zun\"achst fest, da\ss{} wir anstatt zu Beginn eines Abschnittes an der Stelle wo wir unsere Bezeichnungen festlegen zu bemerken, da\ss{} ein gewisser Buchstabe (z.B. $A$) im Folgenden f\"ur eine Aussage stehe, \"uberhaupt nichts derartiges bemerken, sondern ab dem ersten Auftreten von $A$,
$$
\BGcontent{_A}
$$
schreiben. Wollen wir zum Ausdruck bringen, da\ss{} die Aussage $\textbf{--}\,A$ wahr ist, so verzichten wir auf die Worte \textquotedblleft{}Es gilt\textquotedblright{} unmittelbar vor unserer Aussage (oder unserem aussagenlogischen Ausdruck), und schreiben stattdessen
$$
\BGassert{_A}.
$$
\end{conv}

Im Folgenden werden wir h\"aufig zwischen der heute \"ublichen Notation und der Begriffsschrift wechseln. Dabei ist dann $A$ (ohne den waagerechten Strich) eine Aussage in heutiger Notation und $\textbf{--}\,A$ dieselbe Aussage in Begriffsschrift. Um die folgenden Erkl\"arungen nicht unn\"otig zu komplizieren, verzichten wir im Folgenden darauf, jedesmal auf den Unterschied hinzuweisen und \"uberlassen dies dem Leser.

\begin{dfn}
F\"ur die Verneinung $\neg A$ unserer Aussage $A$ schreiben wir in Begriffsschrift
$$
\BGnot{_A}.
$$
\end{dfn}

Wir wissen, da\ss{} von den Junktoren $\neg$, $\vee$, $\wedge$, $\Rightarrow$, etc. die Verneinung und ein beliebiger anderer gen\"ugen, um damit alle weiteren auszudr\"ucken, wobei es jeweils unterschiedliche M\"oglichkeiten gibt, dies zu tun. Wie schon in Abschnitt 1 erw\"ahnt, ist die Begriffsschrift redundanzfrei. Es gen\"ugt also ein weiteres Zeichen.

\begin{dfn} Seien $\textbf{--}\,A$ und $\textbf{--}\,B$ zwei Aussagen. Dann bezeichnet
$$
\setlength{\BGlinewidth}{40pt}\BGconditional{\BGterm{_B}}{\BGterm{_A}}
$$
die Aussage $B\Rightarrow A$. Obiges Zeichen entspricht also dem heute \"ublichen Implikationspfeil.
\end{dfn}

\begin{rem} Im Folgenden werden wir es oft mit zusammengesetzten Zeichen zu tun haben. Dabei denken wir uns vor jedem senkrechten \textquotedblleft{}Bedingungsstrich\textquotedblright{} eine Klammer, die am Ende des entsprechenden Teilzeichens geschlossen wird. Zum Beispiel bezieht sich die Verneinung in \ref{lem1}.(ii) auf alles rechts davon Stehende --- was im Beweis auch nochmal verdeutlicht wird.
\end{rem}

\begin{lem}\label{lem1}\begin{compactitem}\item[(i)]In Begriffsschrift entspricht die Disjunktion von $\textbf{--}\,A$ und $\textbf{--}\,B$ dem folgenden Zeichen.
$$
\setlength{\BGlinewidth}{40pt}\BGconditional{\BGnot\BGterm{_B}}{\BGterm{_A}}
$$
\item[(ii)] Analog entspricht die Konjunktion dem Folgenden.
$$
\setlength{\BGlinewidth}{40pt}\BGnot\BGconditional{\BGterm{_B}}{\BGnot \BGterm{_A}}
$$
\end{compactitem}
\end{lem}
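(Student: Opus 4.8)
Der Plan ist, beide behaupteten Entsprechungen auf bekannte aussagenlogische \"Aquivalenzen zur\"uckzuf\"uhren (gleichwertig kann man jeweils die Wahrheitstafeln vergleichen). Dazu bestimme ich zun\"achst mithilfe der Definition des Implikationszeichens und der Klammerkonvention aus der vorangehenden Bemerkung, welchen Ausdruck in heutiger Notation jedes der beiden zusammengesetzten Zeichen darstellt, und rechne dann nach, dass dieser zu $A\vee B$ bzw. zu $A\wedge B$ \"aquivalent ist.
\smallskip
\\ Zu Teil (i): Das dort angegebene Zeichen entsteht aus dem Implikationszeichen, indem man den Bedingungsteil $\textbf{--}\,B$ durch $\neg B$ ersetzt --- das zus\"atzliche Verneinungszeichen steht unmittelbar vor dem Teilzeichen mit Inhalt $B$ und bezieht sich daher nur auf dieses. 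Es stellt also den Ausdruck $(\neg B)\Rightarrow A$ dar, und ich rechne $(\neg B)\Rightarrow A\equiv\neg(\neg B)\vee A\equiv B\vee A\equiv A\vee B$, wobei ich die Darstellung der Implikation durch Disjunktion und Verneinung, die doppelte Verneinung sowie die Kommutativit\"at der Disjunktion benutze.
\smallskip
\\ Zu Teil (ii): Hier ist die Klammerkonvention sorgf\"altig anzuwenden. Die innere Verneinung steht unmittelbar vor dem Teilzeichen mit Inhalt $A$, also vor dem Konklusionsteil der darin auftretenden Implikation, und bezieht sich nur auf $A$; jene Implikation hat somit den Bedingungsteil $B$ und die Konklusion $\neg A$, steht also f\"ur $B\Rightarrow(\neg A)$. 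Die \"au\ss{}ere Verneinung bezieht sich der Konvention gem\"a\ss{} auf das gesamte rechts von ihr stehende Implikationszeichen, so dass das Zeichen aus (ii) insgesamt den Ausdruck $\neg\bigl(B\Rightarrow(\neg A)\bigr)$ darstellt. Damit rechne ich $\neg\bigl(B\Rightarrow\neg A\bigr)\equiv\neg\bigl(\neg B\vee\neg A\bigr)\equiv\neg\neg B\wedge\neg\neg A\equiv B\wedge A\equiv A\wedge B$, diesmal zus\"atzlich unter Verwendung der de Morganschen Regel.
\smallskip
\\ Eine ernsthafte Schwierigkeit sehe ich nicht; der einzig heikle Punkt ist die Bestimmung des von dem Zeichen in (ii) dargestellten Ausdrucks, also die Frage, ob sich die \"au\ss{}ere Verneinung auf die ganze Implikation oder nur auf deren Konklusion bezieht. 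Genau hierf\"ur ist die Klammerkonvention aus der Bemerkung gedacht, und beim Ausformulieren des Beweises wird es sich anbieten, an dieser Stelle die gedachten Klammern einmal explizit hinzuschreiben, wie dort angek\"undigt.
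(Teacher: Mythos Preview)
Your argument is correct and follows the same overall structure as the paper: first translate each Begriffsschrift symbol into modern notation (obtaining $\neg B\Rightarrow A$ in (i) and $\neg(B\Rightarrow\neg A)$ in (ii)), then verify the claimed equivalence. The only difference is in how the verification is carried out: the paper simply writes out the full truth tables for $\neg B\Rightarrow A$ versus $A\vee B$ and for $\neg(B\Rightarrow\neg A)$ versus $A\wedge B$, whereas you argue algebraically via the identity $p\Rightarrow q\equiv\neg p\vee q$, double negation, commutativity, and (for (ii)) a de~Morgan law. Both routes are standard and equally elementary; your version is a bit more concise, while the paper's truth tables have the advantage of relying on nothing beyond the definition of the connectives. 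Your remark about the bracketing convention in (ii) matches exactly what the paper's preceding Bemerkung announces and uses.
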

\begin{proof} (i) Das Begriffsschrift--Symbol \"ubersetzt lautet $\neg B\Rightarrow A$. Per Wahrheitstafel
$$\begin{array}{|c|c|c|c|c|c|}\hline
A & B & \neg B &\neg B\Rightarrow A & A\vee B \\\hline
w & w & f & w & w \\\hline
w & f & w & w & w \\\hline
f & w & f & w & w \\\hline
f & f & w & f & f \\\hline
\end{array}$$
folgt die \"Aquivalenz zu $A\vee B$.
\medskip
\\(ii) Wir haben
$$
\big[\neg (B\Rightarrow \neg A)\big] \Leftrightarrow \big[A\wedge B\big],
$$
wegen der folgenden Wahrheitstafel.
$$\begin{array}{|c|c|c|c|c|c|}\hline
A & B & \neg A & B\Rightarrow \neg A &\neg(B\Rightarrow \neg A) & A\wedge B \\\hline
w & w & f & f & w &  w\\\hline
w & f & f & w & f &  f\\\hline
f & w & w & w & f &  f\\\hline
f & f & w & w & f &  f\\\hline
\end{array}$$
\end{proof}

\begin{ex} Die Aussage
$$
\setlength{\BGlinewidth}{40pt}\BGnot\BGconditional{\BGterm{_B}}{\BGterm{_A}}
$$
lautet in moderner Notation
$$
B\wedge \neg A.
$$
Wie oben sieht man dies per \"Ubersetzung und mittels geeigneter Wahrheitstafeln.
\end{ex}

In Abschnitt 1 haben wir auch schon erw\"ahnt, da\ss{} von den Symbolen $\forall$ und $\exists$ eines gen\"ugen w\"urde, da sich das jeweils andere dadurch ausdr\"ucken l\"a\ss{}t. Wir definieren nur ein Symbol f\"ur den Allquantor und folgern dann, welches Zeichen f\"ur den Existenzquantor steht.

\begin{dfn} Sei $\textbf{--}\,F(\cdot)$ ein aussagenlogischer Ausdruck und $x$ eine Aussagenvariable. Dann bezeichnet
$$
\setlength{\BGlinewidth}{50pt}
\BGcontent\BGquant{\text{$x$}}\BGterm{_{F(x)}}
$$
die Aussage $\forall\:x\colon F(x)$.
\end{dfn}

\begin{lem} Das Symbol
$$
\setlength{\BGlinewidth}{50pt}
\BGnot\BGquant{\text{$x$}}\BGnot\BGterm{_{F(x)}}
$$
entspricht in moderner Schreibweise der Aussage $\exists \:x\,\colon F(x)$.
\end{lem}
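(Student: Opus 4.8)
The plan is to reduce the claim to the already established translations of the negation and the universal quantifier, and then verify the resulting classical equivalence by the usual truth-table (or rather ``for all $x$'') argument. Concretely, the Begriffsschrift symbol displayed in the statement is, by the convention on the bracketing of condition strokes, the outermost negation applied to the universally quantified expression built from the inner negation of $\textbf{--}\,F(x)$. Reading this off with the previously fixed translations gives, in modern notation, the formula $\neg\,\forall\,x\colon\neg F(x)$.

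First I would make this translation explicit: by the definition of the negation sign, $\BGnot\BGterm{_{F(x)}}$ stands for $\neg F(x)$; by the definition of the quantifier sign, prefixing the quantifier over $x$ produces $\forall\,x\colon\neg F(x)$; and by the definition of the negation sign once more, the leading stroke yields $\neg\,\forall\,x\colon\neg F(x)$. Second I would argue the equivalence $\big[\neg\,\forall\,x\colon\neg F(x)\big]\Leftrightarrow\big[\exists\,x\colon F(x)\big]$. This is the de Morgan law for quantifiers: if it is not the case that $F$ fails for every $x$, then there must be some $x$ for which $F(x)$ holds, and conversely. In the informal setting of this paper, where $F(x)$ is a proposition with a truth value for each admissible value of $x$, this can be stated directly, or indicated by the same kind of case analysis (``$\neg F(x)$ is true for all $x$'' versus ``$\neg F(x)$ is false for at least one $x$'') that was used for the propositional lemmas above.

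I expect there is essentially no obstacle here; the only point requiring care is the parenthesization. One must stress that the inner negation stroke attaches only to $\textbf{--}\,F(x)$ (this is exactly the phenomenon the earlier \textbf{Bemerkung} warns about), so that the quantifier binds $\neg F(x)$ and the outer stroke negates the whole quantified statement, rather than, say, negating $F(x)$ after the quantifier or some other misreading. Once the bracketing is pinned down, the proof is just the chain of three defined translations followed by the quantifier de Morgan equivalence, and I would present it in that order, mirroring the style of the proof of Lemma~\ref{lem1}.
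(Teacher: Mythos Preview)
Your proposal is correct and follows essentially the same approach as the paper: translate the symbol layer by layer to obtain $\neg(\forall\,x\colon\neg F(x))$ and then invoke the standard quantifier rule to conclude equivalence with $\exists\,x\colon F(x)$. The paper's proof is in fact much terser than yours---it simply states the translation and appeals to ``die \"ublichen Regeln \"uber den Umgang mit Quantoren'' without spelling out the de Morgan step or the bracketing discussion---so your version is a faithful but more detailed rendering of the same argument.
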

\begin{proof}Per Definition entspricht obiges gerade $\neg(\forall\:x\,\colon \neg F(x))$. Mit den \"ublichen Regeln \"uber den Umgang mit Quantoren sieht man sofort, da\ss{} dies \"aquivalent zu $\exists\:x\,\colon F(x)$ ist.
\end{proof}

\begin{ex} Die Begriffsschrift l\"a\ss{}t nun in nat\"urlicher Weise aussagenlogische Ausdr\"ucke in denen mehr als eine Variable vorkommt zu: Sind $F(\cdot)$ und $R(\cdot,\cdot)$ aussagenlogische Ausdr\"ucke und $x$, $y$ Aussagenvariablen, so lautet das Begriffsschrift--Symbol f\"ur die Feststellung
\begin{center}
\textquotedblleft{}Es gilt: $\forall\:x \text{ mit }\,F(x)\;\exists\:y\colon R(x,y)$\textquotedblright{}
\end{center}
gerade
$$
\setlength{\BGlinewidth}{70pt}
\BGassert\BGquant{\text{$x$}}\BGconditional{\BGterm{_{F(x).}}}{\BGnot\BGquant{\text{$y$}}\BGnot \BGterm{_{R(x,y)}}}
$$
\end{ex}

Iterationen sind hierbei keine Grenzen gesetzt, wie die folgenden \"Ubungsaufgaben zeigen. Ferner lassen sich auch formale Operationen (wie z.B. die Negation einer Aussage zu formulieren) in der Begriffsschrift \"ahnlich leicht bewerkstellingen, wie in der modernen Quantorenschreibweise.

\begin{aufg} Schreibe den folgenden aussagenlogischen Ausdruck in moderner Notation. Beachte: $f\colon\mathbb{R}\rightarrow \mathbb{R}$ bezeichnet eine Abbildung und $x_0\in\mathbb{R}$ ist beliebig.
$$
\setlength{\BGlinewidth}{100pt}
\BGcontent\BGquant{\varepsilon}\BGconditional{\BGterm{_{\varepsilon>0}}}{\BGnot\BGquant{\delta}\BGnot \BGconditional{\BGterm{_{\delta>0}}}{\BGquant{\text{$x$}} \BGconditional{\BGterm{_{|x-x_0|<\delta}}}{   \BGterm{_{|f(x)-f(x_0)|<\varepsilon}}}}}
$$
\end{aufg}

\textit{L\"osung:} Hier steht die Definition daf\"ur da\ss{} $f$ stetig an der Stelle $x_0$ ist, n\"amlich
$$
\forall\:\varepsilon>0\;\exists\:\delta>0\;\forall\:x,\,|x-x_0|<\delta\, \colon |f(x)-f(x_0)|<\varepsilon.
$$

\begin{aufg} Schreibe die Aussage
$$
\forall\:n\in\mathbb{N}\;\exists \: m\geqslant n\,,k\:\forall\:\mu\geqslant m,\,l,\,\varepsilon>0\;\exists\:L,\,S>0\,\colon v_{m,l}\,\leqslant\,\max(\varepsilon v_{n,k},\,S v_{\mu,L})
$$
mithilfe von Freges Begriffsschrift und bestimme ihre Verneinung.
\end{aufg}

\textit{L\"osung:} Obige Bedingung sieht in Freges Notation folgenderma\ss{}en aus:
$$
\setlength{\BGlinewidth}{150pt}
\BGcontent\BGquant{\text{$n$}}\BGconditional{\BGterm{_{n\in\mathbb{N}}}}{\BGnot\BGquant{\text{$m, k$}}\BGnot \BGconditional{\BGterm{_{m\geqslant n}}}{\BGquant{\text{$\mu,l,\varepsilon$}} \BGconditional{\BGterm{_{\mu\geqslant m,\, \varepsilon>0}}}{\BGnot \BGquant{\text{$L,\,S$}}\BGnot \BGconditional{\BGterm{_{S>0}}}{    \BGquant{\text{$x$}}\BGconditional{\BGterm{_{x\in X}}}{   \BGterm{_{v_{m,l}(x)\leqslant\max(\varepsilon v_{n,k}(x),\,S v_{\mu,L}(x))}}}}}}}
$$
Ihre Verneinung gewinnt man durch \textquotedblleft{}umdrehen\textquotedblright{} der Quantoren, sowie der Absch\"atzung:
$$
\setlength{\BGlinewidth}{150pt}
\BGcontent\BGnot\BGquant{\text{$n$}}\BGnot\BGconditional{\BGterm{_{n\in\mathbb{N}.}}}{\BGquant{\text{$m,k$}} \BGconditional{\BGterm{_{m\geqslant n}}}{\BGnot\BGquant{\text{$\mu,l,\varepsilon$}}\BGnot \BGconditional{\BGterm{_{\mu\geqslant m,\, \varepsilon>0}}}{ \BGquant{\text{$L,S$}} \BGconditional{\BGterm{_{S>0}}}{  \BGnot\BGquant{\text{$x$}}\BGnot \BGconditional{\BGterm{_{x\in X}}}{   \BGterm{_{v_{m,l}(x)>\max(\varepsilon v_{n,k}(x),\,S v_{\mu,L}(x))}}}}}}}
$$

\section{Diskussion}\label{3}

Aus heutiger Sicht mag die Fregesche Notation umst\"andlich erscheinen. Jedoch mu\ss{} ber\"ucksichtig werden, da\ss{} Freges Konzept in nat\"urlicher und selbstverst\"andlicher Weise all diejenigen Hilfsmittel zur Verf\"ugung stellt, die auch heute noch am Anfang eines jeden Mathematikstudiums stehen.
\smallskip
\\Was seine Pr\"asentation in \cite{Frege} angeht, so ist diese ebenfalls durchaus mit einer modernen (nicht f\"ur Logiker aber wohl f\"ur Mathematiker) vergleichbar: Nach einer Einf\"uhrung, in welcher er darauf hinweist, da\ss{} die Sprache ein zu unpr\"azises Mittel f\"ur die formale Begr\"undung von Sachverhalten ist (und den Unterschied zwischen Beweis und Heuristik thematisiert), f\"uhrt er die Begriffe \textquotedblleft{}Inhalt\textquotedblright{} und \textquotedblleft{}Urteil\textquotedblright{} ein; heute \"ublich sind die Bezeichnungen \textquotedblleft{}Aussage\textquotedblright{} und vielleicht \textquotedblleft{}Aussage mit Wahrheitswert $w$\textquotedblright{}. In seinen Bemerkungen zu den Begriffen \textquotedblleft{}Subjekt\textquotedblright{} und \textquotedblleft{}Pr\"adikat\textquotedblright{} verdeutlicht er (nochmals), da\ss{} er mit seiner Notation grammatikalische Probleme zu \"uberwinden versucht. Der von ihm eingef\"uhrte \textquotedblleft{}Bedingungsstrich\textquotedblright{} (die 
Implikation) wird 
de facto mithilfe von Wahrheitstafeln definiert --- auch wenn er an dieser und den folgenden Stellen stets die einzelnen Zeilen in Prosa durchgeht. Er stellt fest, da\ss{} nur die Implikation n\"otig ist, um aus bekannten Schlu\ss{}weisen neue zu gewinnen. Seine Begriffe \textquotedblleft{}Function\textquotedblright{} und \textquotedblleft{}Argument\textquotedblright{} entsprechen gerade dem was heute unter dem Namen \textquotedblleft{}aussagenlogischer Ausdruck\textquotedblright{} (oder auch \textquotedblleft{}Aussageform\textquotedblright{}) und \textquotedblleft{}Aussagenvariable\textquotedblright{} auf analoge Weise eingef\"uhrt wird. Nach der Definition des Existenzquantors findet sich sogar explizit die Erkl\"arung \textquotedblleft{}Es gibt...\textquotedblright{}.
\smallskip
\\Zusammenfassend l\"a\ss{}t sich das erste Kapitel von \cite{Frege} als eine Einf\"uhrung in die Aussagenlogik betrachten, die sich lediglich in grammatikalischen und symbolischen Belangen von modernen Darstellungen unterscheidet.
\smallskip
\\F\"ur Informationen zur Person und anderen Arbeiten Gottlob Freges siehe z.B. Dummett \cite{Dummett, Dummett2}, George und Heck \cite{Heck} sowie Sternfeld \cite{Sternfeld} und die dort angegebenen Referenzen.

\small


\begin{thebibliography}{1}

\bibitem{Dummett}
M.~Dummett, \emph{Gottlob {F}rege (1848-1925)}, The Encyclopedia of Philosophy
  (1967), no.~3, 225--237.

\bibitem{Dummett2}
M.~Dummett, \emph{Frege: {P}hilosophy of {M}athematics}, Duckworth, London, 1991.

\bibitem{Frege}
G.~Frege, \emph{Begriffsschrift und andere {A}ufsätze}, Darmstadt:
  Wissenschaftl. Buch\-ges., New York, 1977, mit E. Husserls u. H. Scholz' Anm.
  hrsg. von Ignacio Angelelli, 3. Aufl., 2. Nachdr.-Aufl.

\bibitem{Heck}
A.~George and R.~Heck, \emph{Gottlob {F}rege (1848-1925)}, Routledge
  Encyclopedia of Philosophy (1998), 765--778.

\bibitem{Sternfeld}
R.~Sternfeld, \emph{Frege's {L}ogical {T}heory}, Southern Illinois University
  Press, Carbondale and Edwardsville, 1966, foreword by George Kimball
  Plochmann.

\end{thebibliography}
\end{document}